\newcommand{\pagebudget}[1]{} % allocation of responsibility
\newtheorem{definition}{Definition}
\newtheorem{lemma}{Lemma}
\newtheorem{proposition}{Proposition}
\newtheorem{remark}{Remark}
\newcommand{\R}{\mathbb{R}}
\newcommand{\mini}{\mathop{\mbox{minimize}}}
\newcommand{\maxi}{\mathop{\mbox{maximize}}}
\newcommand{\amin}{\mathop{\mbox{argmin}}}
\newcommand{\st}{\mbox{subject to}}
\newcommand{\dps}{\displaystyle}
\newcommand{\bvec}{\left(\begin{array}{c}}
\newcommand{\evec}{\end{array}\right)}
\newcommand{\bsub}{\begin{subequations}}
\newcommand{\esub}{\end{subequations}}
\newcommand{\diag}{\mbox{diag}}
\newcommand{\tc}{\textcolor}
\newcommand{\bfo}{{\bf 1}}
\newcommand{\DefinedAs}[0]{\mathrel{\mathop:}=}
\newcommand{\exclude}[1]{} % ... simply excludes stuff
\begin{document}

\author{Fu Lin \thanks{F.\ Lin is with the Systems Department, United Technologies Research Center, 411 Silver Ln, East Hartford, CT 06108. E-mail: linf@utrc.utc.com.}}

\title{\LARGE \bf Worst-Case Load Shedding in Electric Power Networks}

\date{\today}

\maketitle

\begin{abstract}
We consider the worst-case load-shedding problem in electric power networks where a number of transmission lines are to be taken out of service. The objective is to identify a pre-specified number of line outage that leads to the maximum interruption of  power generation and load at the transmission level, subject to the AC power flow model, the load and generation capacity of the buses, and the phase angle limit across the transmission lines. For this nonlinear model with binary constraints, we show that all decision variables are separable except for the nonlinear power flow equations. We develop an iterative decomposition algorithm, which converts the worst-case load shedding problem into a sequence of small subproblems. We show that the subproblems are either convex problems that can be solved efficiently or nonconvex problems that have closed-form solutions. Consequently, our approach is scalable for large networks. Furthermore, we prove global convergence of our algorithm to a critical point and the objective value is guaranteed to decrease throughout the iterations. Numerical experiments with IEEE test cases demonstrate the effectiveness of the developed approach. 
\end{abstract}

\vspace{1ex}

{\bf Keywords:} worst-case load shedding,  proximal alternating linearization method, power systems, vulnerability analysis.

\vspace{1ex}

%%%%%%%%%%%%%%%%%%%%%%%%%%%%%%%%%%%%%%%%%%%%%%%%%%%%%%%%%%%%%%%%%%%%%%%%
\section{Introduction}

Redundancy of interconnection in power systems is known to help prevent cascade blackouts~\cite{and05}. On the other hand, recent study suggests that having too much interconnectivity in power networks can result in excessive capacity, which in turn fuels larger blackouts~\cite{brudsolei12}. Therefore, a balance between the operational robustness and the network interconnectivity is important for power grid operations. 

Traditionally, contingency analysis in power grids has focused on the severity of line outages using linearized power flow models~\cite{gal84}.  Recent years have seen vulnerability analysis of line outages using nonlinear power flow models~\cite{donloplespinyanmez05,donloplespinyanmez08,pinmezdonles10}. \tc{black}{Following this line of research, we study the worst-case load-shedding problem. Our objective is to identify a small number of transmission lines whose removal leads to the maximum damage to the power systems.  This problem contains binary decision variables for taking lines offline and the nonlinear AC power flow equations. As a result, it falls into the class of mixed-integer nonlinear programs (MINLPs), which is beyond the capacities of the state-of-the-art MINLP solvers even for small power systems.} 

\tc{black}{Our contributions can be summarized as follows. First, the worst-case load-shedding model incorporates the AC power flow equations, the generation and load capacities of the buses, and the thermal constraints across the transmission lines. This model is capable of providing more accurate operating conditions than models based on the linearized power flow equations. Second, we show that the decision variables for taking transmission lines offline and for the generation, load, and phase angles across buses are separable except for the power flow constraints. By exploiting this separable structure, we develop an algorithm that decomposes the worst-case load-shedding problem into a sequence of subproblems that are either convex problems or nonconvex problems that have closed-form solutions. As a result, our approach is scalable for large networks. Third, we prove the global convergence of our algorithm to a critical point of the nonconvex problem. Furthermore, the objective value is monotonically decreasing throughout the iterations. Our proof techniques build upon convergence results from the proximal alternating linearization method (PALM).}

In our previous work~\cite{linche16}, the alternating direction method of multipliers~(ADMM) was proposed to deal with the optimal load-shedding problem with linearized power flow model. The shortcoming of ADMM is that there is no theoretical guarantee of convergence for nonconvex problems. In contrast, PALM allows us to handle nonconvex, nonsmooth problem with provably convergence guarantee. 

%PALM has found applications in learning low-complexity model with steady-state data~\cite{linche17,linche18}, in sparse control of networked systems~\cite{linbop17}, and in co-design of output feedback controllers~\cite{linade17}.

There is a large body of work on the load shedding problem in electric power networks~\cite{moselhmba97,xugir01,aponel06,farpietir07,sadamrran09,hajpestinpie68,palshamis85,halkot93,ter06,pahscodassch13}. We next provide a brief literature review and put our contributions in context.

Several studies focus on the load-shedding problem in static networks, that is, the network structure does not change over time~\cite{moselhmba97,aponel06,farpietir07}. 
 In contrast, our load-shedding model allows the operator to remove a prescribed number of lines and evaluate the maximum amount of load loss.

Another line of work studies efficient numerical methods for the load-shedding problem~\cite{hajpestinpie68,palshamis85,halkot93,xugir01,ter06}. In~\cite{xugir01}, a discretization technique was developed to convert the differential equations to algebraic constraints. The resulting nonlinear programming (NLP) problem was solved by using standard NLP solvers. Alternatively, conventional optimization methods have been proposed for similar NLP formulations.  In~\cite{hajpestinpie68}, the Newton's method was employed to minimize the curtailment of load service after severe faults. In~\cite{palshamis85}, a quasi-Newton method was proposed for the load shedding problem with voltage and frequency characteristics of load. In~\cite{halkot93}, a projected gradient method was used to solve the under-frequency load shedding problem.  In contrast to these NLP-based approaches, our formulation incorporates binary decision variables to model line removals in AC power networks. Thus, it falls in the class of more general class of MINLP problems. 

Heuristics approaches have been proposed for the nonconvex load-shedding problem~\cite{sadamrran09,pahscodassch13}. In~\cite{sadamrran09}, a particle swarm-based simulated annealing technique was introduced for the under voltage load-shedding problem. In~\cite{pahscodassch13}, tree-like heuristics strategies were proposed for emergency situations to maintain reliability. In contrast to these heuristics approaches with little theoretical guarantees, we prove that PALM converges to a critical point of the nonconvex load-shedding problem. 

Our presentation is organized as follows. In Section~\ref{sec.loadshed}, we formulate the worst-case load-shedding problem for the AC power networks. In Section~\ref{sec.separable}, we study the separable structure of the load-shedding problem. In Section~\ref{sec.PALM}, we develop the PALM algorithm and in Section~\ref{sec.conv}, we analyze its convergence behavior. In Section~\ref{sec.results}, we provide numerical results for the IEEE test cases. In Section~\ref{sec.conclusion}, we conclude the paper and discuss future directions.

%%%%%%%%%%%%%%%%%%%%%%%%%%%%%%%%%%%%%%%%%%%%%%%%%%%%%%%%%%%%%%%%%%%%%%%%
\section{\tc{black}{Worst-Case Load-Shedding Problem}}
\label{sec.loadshed}

In this section, we formulate the worset-case load-shedding problem for electrical power grids with AC power flow models. In contrast to existing models in literature that describe AC power flow between individual transmission lines, we take advantage of the incidence matrix to encode the network connection in a matrix form. The compact representation of the model facilitates the derivation of the first-order derivatives and enables the convergence analysis in subsequent sections.

\tc{black}{Following~\cite{donloplespinyanmez08,pinmezdonles10}, we consider a lossless power network with $n$ buses and $m$ lines.} A line $l$ connecting bus $i$ and bus $j$ can be described by a vector $e_l \in \R^n$ with $1$ and $-1$ at the $i$th and $j$th elements, respectively, and $0$ everywhere else. Let $E = [ \, e_1 \cdots e_m \, ] \in \R^{n \times m}$ be the incidence matrix that describes $m$ transmission lines of the network, and let $D \in \R^{m \times m}$ be the diagonal matrix with the $l$th diagonal element being the admittance of line $l$. For a lossless power network with fixed voltage at the buses, the active AC power flow equation can be written in a vector form~\cite{donloplespinyanmez08,pinmezdonles10}
\begin{equation}
\label{eq.ac}
E D \sin (E^T \theta) \,=\, P,
\end{equation}
where $\theta \in \R^{n}$ is the phase angles and $P \in \R^n$ is the real power injection at the buses. \tc{black}{Reactive power equation over networks can be written similarly in a vector form~\cite{donloplespinyanmez08}. One can extend this model to include per-unit voltages of buses; see~\cite{donloplespinyanmez08,pinmezdonles10} for detail.}

We enumerate the buses such that the power injection $P$ can be partitioned into a load vector $P_d \leq 0$ and a generation vector $P_g > 0$, thus, $P = [P_d^T \; P_g^T]^T$. The sequence of buses indexed in $P$ is the same as that of the columns of the incidence matrix $E$. Since the power system is lossless, the sum of load is equal to the sum of generation
\[
  \bfo^T P \,=\, 0,
\]
where $\bfo$ is the vector of all ones. 

Let $\gamma \in \{0,1\}^m$ denote whether a line is in service or not: $\gamma_l = 1$ if line $l$ is in service and $\gamma_l = 0$ if line $l$ is out of service. Let $z = [z_d^T \; z_g^T]^T \in \R^n$, where $z_d \geq 0$ and $z_g \leq 0 $ are the load-shedding vector and the generation reduction vector, respectively. It follows that
\[
P_d \,\leq \,P_d \,+\, z_d \,\leq\, 0, 
\]
where the upper  bound $0$ enforces $P_d + z_d$ to be a load vector. Similarly, we have
\[
0 \,\leq\, P_g \,+\, z_g \,\leq\, P_g,
\]
where the lower bound $0$ enforces $P_g + z_g$ to be a generator vector. Since the load shed must be equal to the generation reduction, we have
\[
  \bfo^T z \,=\, 0.
\]
The active power flow equation with possible line removal can be written as
\[
E D \diag(\gamma) \sin(E^T \theta) \,=\, P \,+\, z,
\]
where $\diag(\gamma)$ is a diagonal matrix with its main diagonal equal to $\gamma$. 

\tc{black}{Our objective is to identify a small number of lines in the AC-model power network whose removal results in the maximum load shedding.} Thus, we consider the following {\em worst-case load-shedding\/} problem:
\begin{subequations}
  \label{eq.loadshed}
   \begin{alignat}{2}
     \dps \maxi_{\gamma, \, \theta, \, z} \quad & \mbox{LoadShedding} \,=\,  \bfo^T z_d && \\
     \st                 \quad & E D \diag(\gamma) \sin(E^T \theta) \,=\, P \,+\, z && \label{eq.powerflow}\\[0.1cm]
                         %& \diag(\gamma) \,=\, \diag(\gamma)  && \label{eq.Gamma} \\[0.1cm]
                         & \gamma \in \{0, 1\}^m, \quad m - \bfo^T \gamma \, = \, K && \label{eq.gamma} \\[0.1cm]
                         & \bfo^T z \,=\, 0, \quad z \,=\, [\, z_d^T \; z_g^T \,]^T && \label{eq.z} \\[0.1cm]
                         & 0 \leq z_d \leq -P_d, \quad
                           -P_g \leq z_g \leq 0 && \label{eq.zbound} \\[0.1cm]
                         & -\frac{\pi}{2} \,\leq\, E^T \theta \,\leq\, \frac{\pi}{2}. && \label{eq.theta}
   \end{alignat}
\end{subequations}
	The decision variables are the phase angle $\theta$, the reduction of load $z_d$, the reduction of generation $z_g$, and the out-of-service line indicator $\gamma$. The problem data are the incidence matrix $E$ for the network topology, the admittance matrix $D$ for the transmission lines, the real power injection $P$ at the buses, and the number of out-of-service lines $K$. 
	
Our load-shedding problem is based on the model introduced in~\cite{donloplespinyanmez05}. Related models have been employed for the continguency analysis in~\cite{donloplespinyanmez08} and vulnerability analysis in~\cite{pinmezdonles10}. \tc{black}{In particular, the AC model in~\cite{donloplespinyanmez08} includes both active and reactive power flow equations with varying voltage magnitudes. In this paper, we focus on the active power flow equation with fixed voltages as a step towards addressing the load-shedding problem with the full AC power flow model. Note that the angle difference between the buses $E^T \theta$ takes values between $-\pi/2$ and $\pi/2$. This is in contrast to the assumption of small angle differences employed in DC power flow models~\cite{linche17}.} 

\tc{black}{While we assume a lossless network, the lossless constraint $\bfo^T z = 0$ can be extended to $\bfo^T z  \leq 0$ that takes into account loss over transmission. Similarly, the constraint on power generation $z_g \leq 0$ can be replaced by $z_g \leq \bar{P}_g$ where $\bar{P}_g > 0$. This allows increase in the power generation for re-dispatch flexibility of generators. These extensions can be accommodated in the proposed approach in subsequent sections.}

%We test the loadshedding problem using BONMIN~\cite{bonmin} and SCIP~\cite{scip} on the IEEE 118-bus test case. These solvers aim at finding global optimal solutions via branch-and-bound. Our numerical experiment indicates that the MINLP problem for a medium size power systems (e.g., IEEE 118-bus case) is beyond the capability of state-of-the-art MINLP solvers. This motivates us to develop an alternative approach in the subsequent sections.

% \begin{equation}
%    \begin{array}{ll}
%      \dps \max_{\gamma, \theta, \, z} \quad & {\rm LoadShedding}(\theta,z) \,=\,  \bfo^T z_d  \\
%      \st                 \quad & E D \diag(\gamma) \sin (E^T \theta) \,=\, P \,+\, z \\[0.1cm]
%                          & \diag(\gamma) \,=\, \diag(\gamma)  \\[0.1cm]
%                          & \gamma \in \{0, 1\}^m, \quad m - \bfo^T \gamma \, = \, K \\[0.1cm]
%                          & \bfo^T z \,=\, 0, \quad z \,=\, [\, z_d^T \; z_g^T \,]^T \\[0.1cm]
%                          & P_d \leq P_d + z_d \leq 0 \\[0.1cm]
%                          & 0 \leq P_g + z_g \leq P_g  \\[0.1cm]
%                          & -\dfrac{\pi}{2} \,\leq\, E^T \theta \,\leq\, \dfrac{\pi}{2}. 
%    \end{array}
% \end{equation}

% This max-min problem can be interpreted as an attacker-defender model. The attacker intends to cause the most severe demage by taking a fixed number of $K$ lines out of service. Given an attack scenario, the defender tries to minimize the load-shedding subject to power flow equation and other engineering constraints.
% This bilevel MINLP is hard. We will consider an alternative formulation. 

\section{Separable Structure}
\label{sec.separable}

The worst-case load-shedding problem contains nonlinear constraints and binary variables. One source of nonlinearity is the sinusoidal function and another source is the multiplication between $\diag(\gamma)$ and $\sin(E^T \theta)$. Therefore, it falls into the class of mixed-integer nonlinear programs~(MINLPs), which are very challenging problems. In particular, finding a feasible point for MINLPs can be computationally expensive or even NP-hard~\cite{fleley94,ley01,abhleylin10}.

The maximum load-shedding problem~\eqref{eq.loadshed} turns out to have a separable structure that can be exploited. In what follows, we discuss this structure and develop an algorithm based on the proximal alternating linearization method.

A closer look at~(\ref{eq.loadshed}) reveals that the only constraint that couples all decision variables, $\theta$, $z$, and $\gamma$, is the AC power flow equation~(\ref{eq.powerflow}). Otherwise, the binary variable, $\gamma$, is subject only to the cardinality constraint~(\ref{eq.gamma}). The load-shedding and the generation-reduction variables $z_l$, $z_g$ are subject to the losslessness constraint~(\ref{eq.z}) and the box constraint~(\ref{eq.zbound}). The phase angles of the buses, $\theta$, are subject only to the linear inequality constraint~(\ref{eq.theta}). Therefore, the constraints in the load-shedding problem~(\ref{eq.loadshed}) are separable with respect to $\theta$, $z$, and $\gamma$, provided that the power flow equation~(\ref{eq.powerflow}) is relaxed. 

We next penalize the error in the power flow equation~(\ref{eq.powerflow}) and include the penalty in the cost function. Let us denote the coupling constraint as
\[
c(\gamma,z,\theta) \,=\, E D \diag(\gamma) \sin(E^T \theta) \,-\, (P \,+\, z)
\]
and consider %the minimization of the partial augmented Lagrangian function of~\eqref{eq.loadshed}:
\begin{equation}
  \label{eq.lsp}
   \begin{array}{ll}
     \dps \mini_{\gamma, \, z, \, \theta}  & H_\rho(\gamma,z,\theta) \,\DefinedAs\,  - \bfo^T z_d   \,+\, \dfrac{\rho}{2} \| c(\gamma,z,\theta) \|_2^2 \\[0.2cm]
  \st & \eqref{eq.gamma}, \eqref{eq.z}, \eqref{eq.zbound}, \eqref{eq.theta},
   \end{array}
\end{equation}
where $\rho$ is a positive coefficient. Clearly,  (\ref{eq.lsp}) is a relaxation of the worst-case load-shedding problem~(\ref{eq.loadshed}), since the power flow equation \[c(\gamma,z,\theta) \;=\; 0\] is no longer enforced. \tc{black}{Note that we minimize the negative of load shedding and we follow the convention of minimizing the constraint violation.} The penalty of the constraint violation is controlled by the positive scalar $\rho$. By solving the relaxed problem~(\ref{eq.lsp}) with a sufficiently large $\rho$, the solution of~(\ref{eq.lsp}) converges to the solution of~(\ref{eq.loadshed}). Additional background on penalty methods can be found in~\cite[Chapter 13]{lueye08}.

\section{Proximal Alternating Linearization Method}
\label{sec.PALM}

%The ADMM algorithm has proved effective for many problems; see the survey paper~\cite{boyparchupeleck11}. Among other applications in control, ADMM has been successfully applied to the design of sparse feedback gain~\cite{linfarjov13}, the leader selection problem in consensus networks~\cite{linfarjov14}, and the completion of state covariance~\cite{linjovgeo13}.

In this section, we develop a proximal alternating linearization method~(PALM) that exploits the separable structure of the worst-case load-shedding problem. Roughly speaking, PALM minimizes the cost function by cycling through variables while keeping other variables fixed. The original problem is thus broken down into a sequence of partial problems that are more amenable to efficient algorithms or even closed-form solutions. 

We begin by introducing the following indicator functions of the constraint sets:
\begin{equation}
  \label{eq.phi1}
  \phi_1(\gamma) =
  \left\{
    \begin{array}{ll}
      0, & \mbox{if~} \gamma \,\in \,\{0,1\}^m \mbox{~and~} m - \bfo^T \gamma \,=\, K  \\[0.1cm]
      \infty, & \mbox{otherwise},
    \end{array}
  \right.
\end{equation}
\begin{equation}
  \label{eq.phi2}
  \phi_2(z) =
  \left\{
    \begin{array}{ll}
      0, & \mbox{if~} \quad {\bf 0} \,\leq\,  z_d \,\leq\, - \, P_d \\[0.1cm]
         & \mbox{and~} - P_g \,\leq \, z_g \,\leq \, {\bf 0} \\[0.1cm]
         & \mbox{and~} \bfo^T z \, = \, 0 \\[0.1cm]
      \infty, & \mbox{otherwise},
    \end{array}
  \right.
\end{equation}
and
\begin{equation}
  \label{eq.phi3}
  \phi_3(\theta) =
  \left\{
    \begin{array}{ll}
      0, & \mbox{if~} -\dfrac{\pi}{2} \, \leq \, E^T \theta \,\leq \, \dfrac{\pi}{2} \\[0.2cm]
      \infty, & \mbox{otherwise}.
    \end{array}
  \right.
\end{equation}
With these indicator functions, the minimization problem~(\ref{eq.lsp}) can be compactly expressed as
\begin{equation}
  \label{eq.phi}
  \begin{array}{ll}
\dps \mini_{\gamma, \, z, \, \theta} &
\Phi(\gamma,z,\theta)
\, = \,
 \phi_1(\gamma) + \phi_2(z) + \phi_3(\theta) \\
& \qquad \qquad \quad \,+\, H_\rho(\gamma,z,\theta).
  \end{array}
\end{equation}

%A similar approach based on the alternating direction method of multipliers is proposed in~\cite{linche16} for the load shedding problem with nonlinear AC power flow equations. It is observed that ADMM may cycle without convergence when the coefficient for the quadratic penalty term is not sufficiently large~\cite{linche16}. In contrast, we show in Section~\ref{sec.conv} that the proximal alternating method is guaranteed to globally converge to a stationary point of~\eqref{eq.loadshed}. 

% We define the proximal operator associated with a function $\phi$ as
% \[
%   \prox_\lambda^\phi(x) \, \DefinedAs \, \amin_u \left\{ \phi(u) \,+\, \frac{\lambda}{2} \| u - x \|^2  \right\}
% \]
% where $\lambda$ is a positive coefficient. 
% \[
%   \begin{array}{l}
%     \gamma^{k+1} \in \prox_{a_k}^{\phi_1} \left( \gamma^k - \frac{1}{a_k} \nabla_\gamma H_\rho (\gamma^k,z^k,\theta^k) \right) \\
%     z^{k+1} \in \prox_{b_k}^{\phi_2} \left( z^k - \frac{1}{b_k} \nabla_z H_\rho (\gamma^{k+1},z^k,\theta^k) \right)  \\
%     \theta^{k+1} \in \prox_{c_k}^{\phi_3} \left( \theta^k - \frac{1}{c_k} \nabla_\theta H_\rho (\gamma^{k+1},z^{k+1},\theta^k) \right)
%   \end{array}
% \]

The PALM algorithm uses the following iterations 
\begin{subequations}
  \label{eq.palm}
  \begin{align}
\label{eq.palm.gam}
    \gamma^{k+1} &\in \amin_\gamma \left\{ \phi_1(\gamma) \,+\, \frac{a_k}{2} \| \gamma - u^k \|^2_2 \right\} \\
\label{eq.palm.z}
    z^{k+1} &\in \amin_z \left\{ \phi_2(z) \,+\, \frac{b_k}{2} \| z - v^k \|^2_2 \right\} \\
\label{eq.palm.theta}
    \theta^{k+1} &\in \amin_\theta \left\{ \phi_3(\theta) \,+\, \frac{c_k}{2} \| \theta - w^k \|^2_2 \right\} ,
  \end{align}
\end{subequations}
where $a_k$, $b_k$, and $c_k$ are positive coefficients. In other words, PALM minimizes $\Phi$ with respect to $\gamma$, $z$, and $\theta$, one at a time, while fixing the other variables constant. The quadratic proximal terms penalize the deviation of decision variables $(\gamma,z,\theta)$ from $(u^k,v^k,w^k)$ 
\begin{equation}
  \label{eq.uvw}
  \begin{array}{l}
  u^k \,=\,  \gamma^k - \frac{1}{a_k} \nabla_\gamma H_\rho (\gamma^k,z^k,\theta^k) \\[0.2cm]
  v^k \,=\,  z^k - \frac{1}{b_k} \nabla_z H_\rho (\gamma^{k+1},z^k,\theta^k) \\[0.2cm]
  w^k \,=\,  \theta^k - \frac{1}{c_k} \nabla_\theta H_\rho (\gamma^{k+1},z^{k+1},\theta^k).
  \end{array}
\end{equation}
\tc{black}{Note that  $(u^k,v^k,w^k)$ is a linear combination of $(\gamma^k,z^k,\theta^k)$ and the corresponding partial gradient of $(\nabla_\gamma H_\rho,\nabla_z H_\rho,\nabla_\theta H_\rho)$, hence the term linearization in PALM.} We refer to~\cite{parboy14} for extensive discussions on the proximal algorithms and~\cite{bolsabteb14} for the generic PALM algorithms.

\subsection{Efficient Solutions to Subproblems}
\tc{black}{The minimization problems~\eqref{eq.palm} are projections on the corresponding constraint sets in~\eqref{eq.phi1}-\eqref{eq.phi3}. In particular, the projection on the convex sets~\eqref{eq.phi2}-\eqref{eq.phi3} can be computed efficiently. For the projection on the nonconvex set~\eqref{eq.phi1}, it turns out that the solution has a closed-form expression.}

We begin with the projection on the convex sets. The $z$-minimization problem~\eqref{eq.palm.z} can be expressed as 
\begin{equation}
  \label{eq.z-min}
  \begin{array}{ll}
    \mini & \dfrac{b_k}{2} \| z - v^k \|_2^2  \\[0.25cm]
    \st         & L \,\leq \, z \,\leq\, U, \quad \bfo^T z \,=\,0,
  \end{array}
\end{equation}
where the lower bound is $L = - [{\bf 0}^T \, P_g^T]^T$ and the upper bound is $U = - [P_d^T \, {\bf 0}^T]^T$. The solution of this convex quadratic program with box constraints and a {\em single\/} equality constraint, $\bfo^T z = 0$, can be computed efficiently.  

The $\theta$-minimization problem~\eqref{eq.palm.theta} can be expressed as 
\begin{equation}
  \label{eq.theta-min}
  \begin{array}{ll}
    \mini       & 
                   \dfrac{c_k}{2} \, \| \theta - w^k \|_2^2 \\[0.25cm]
    \st         & - \dfrac{\pi}{2} \,\leq\, E^T \theta \,\leq\, \dfrac{\pi}{2}.
  \end{array}
\end{equation}
This bound-constrained least-squares problem can be solved efficiently.

We next provide a closed-form solution to the $\gamma$-minimization problem~\eqref{eq.palm.gam}
\begin{equation}
  \label{eq.gamma-min}
  \begin{array}{ll}
    \mini       &  \dfrac{a_k}{2} \| \gamma - u^k \|_2^2 \\[0.25cm]
    \st         & \gamma \,\in\, \{0,1\}^m, \quad   \bfo^T \gamma \,=\, m - K.
  \end{array}  
\end{equation}
\begin{lemma}
\label{lem.gamsol}
Let $[u^k]_K$ be the $K$th smallest element of $u^k$. The $i$th element of the solution to~\eqref{eq.gamma-min} is given by
\begin{equation}
\label{eq.gamsol}
\gamma_i
\,=\, 
\left\{
\begin{array}{ll}
1 & \mbox{if } u_i^k \,\geq\, [u^k]_K \\
0 & \mbox{otherwise} ,
\end{array}
\right.
\end{equation}
for $i=1,\ldots,m$.
\end{lemma}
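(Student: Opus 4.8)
The plan is to collapse the nonconvex quadratic assignment~\eqref{eq.gamma-min} to an elementary linear selection problem. First I would note that the feasible set $\{\gamma \in \{0,1\}^m : \bfo^T \gamma = m-K\}$ is finite and nonempty (as $0 \le K \le m$), so a minimizer exists and it suffices to characterize the minimizers.

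Since $a_k > 0$, minimizing $\tfrac{a_k}{2}\|\gamma - u^k\|_2^2$ is the same as minimizing $\|\gamma - u^k\|_2^2 = \sum_{i=1}^m \gamma_i^2 - 2\sum_{i=1}^m \gamma_i u_i^k + \|u^k\|_2^2$. On the feasible set, $\gamma_i^2 = \gamma_i$ for every $i$ and $\sum_{i=1}^m \gamma_i = \bfo^T \gamma = m-K$, so both $\sum_i \gamma_i^2$ and $\|u^k\|_2^2$ are constants that do not depend on the choice of feasible point. Hence~\eqref{eq.gamma-min} is equivalent to maximizing $\langle \gamma, u^k \rangle = \sum_{i : \gamma_i = 1} u_i^k$ over all $\gamma \in \{0,1\}^m$ with exactly $m-K$ ones, i.e., to selecting the $m-K$ indices that collect the largest entries of $u^k$.

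The last step is a standard exchange argument. Let $\gamma^\star$ be optimal for this linear problem and suppose, for contradiction, that there are indices $i, j$ with $\gamma^\star_i = 1$, $\gamma^\star_j = 0$, and $u_i^k < u_j^k$. Flipping these two coordinates ($\gamma^\star_i \mapsto 0$, $\gamma^\star_j \mapsto 1$) preserves feasibility and changes the objective by $u_j^k - u_i^k > 0$, contradicting optimality. Therefore every optimal $\gamma^\star$ satisfies $\min\{u_i^k : \gamma^\star_i = 1\} \ge \max\{u_i^k : \gamma^\star_i = 0\}$, so its support consists precisely of indices attaining the $m-K$ largest values of $u^k$; this is the threshold rule~\eqref{eq.gamsol}.

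I do not expect a genuine obstacle here: the whole argument is only a few lines. The one place that requires care is the reduction from the quadratic to the linear objective, which hinges on \emph{both} $\gamma_i \in \{0,1\}$ (to replace $\gamma_i^2$ by $\gamma_i$) and the equality constraint $\bfo^T \gamma = m-K$ (to make $\sum_i \gamma_i$ a constant); without either of these the quadratic term would not drop out. A secondary point is tie-breaking: when several entries of $u^k$ coincide at the threshold separating the $K$ smallest entries from the rest, the minimizer is not unique and any selection of $m-K$ largest entries is optimal, so~\eqref{eq.gamsol} should be read with this understanding.
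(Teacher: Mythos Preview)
Your proof is correct and follows essentially the same exchange/swap argument as the paper, which works directly on the quadratic cost and computes the difference $(1-u_l^k)^2+(u_j^k)^2-\bigl((u_l^k)^2+(1-u_j^k)^2\bigr)=2(u_j^k-u_l^k)>0$ to reach the same contradiction. Your preliminary reduction from the quadratic to the linear objective $\langle\gamma,u^k\rangle$ (using $\gamma_i^2=\gamma_i$ and $\bfo^T\gamma=m-K$) is a clean extra step the paper omits, and your remark on tie-breaking at the threshold is likewise a useful clarification.
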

The proof can be found in Appendix~\ref{pro.gamsol}.

\tc{black}{Proximal algorithms typically rely on convexity assumptions to guarantee convergence~\cite{parboy14}. In contrast, the PALM algorithm does not require the objective or the constraints to be convex. PALM relies on the smoothness condition of the coupling term $H_\rho$ and the Lipschitz conditions of the partial gradients $\nabla H_\rho$. Another feature of PALM is that it does not require stepsize rules as in typical descent-based methods. This is because the Lipschitz conditions guarantee the descent of the objective value in each PALM iteration; see Section~\ref{sec.conv}.}

To complete the PALM algorithm, we provide the expressions for $\nabla H_\rho$ and discuss the choice of $a_k$, $b_k$, and $c_k$  in~\eqref{eq.palm}. 
\begin{lemma}
\label{lem.gradH}
The partial gradients $\nabla H_\rho$ with respect to $\gamma$, $z$, and $\theta$ are given by
\begin{subequations}
    \label{eq.gradH}
  \begin{align}
    \nonumber
    \nabla_\gamma H_\rho
    & =  \rho \left( (DE^T E D) \circ (\sin(E^T \theta) \sin(E^T \theta)^T) \right) \gamma \\ 
    \label{eq.gradHgam}
    & -  \rho \left( (\sin(E^T \theta) (P+z)^T E D) \circ I \right) \bfo, \\[0.1cm]
    \nabla_z H_\rho 
    \label{eq.gradHz}
    & = - [\bfo^T \, {\bf 0}^T]^T + \rho ( P + z ) 
     - \rho (E D \Gamma \sin(E^T \theta) )  , \\[0.1cm]
    \nonumber
    \nabla_\theta H_\rho
    & = \rho \, E \diag(\cos(E^T \theta)) \Gamma D E^T \times \\
    & \hspace{0.5in} (E D \Gamma \sin(E^T \theta) - (P + z))
    \label{eq.gradHthe}
  \end{align}
\end{subequations}
where $\circ$ denotes the elementwise product of matrices. 
\end{lemma}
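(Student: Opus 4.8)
The plan is to differentiate $H_\rho(\gamma,z,\theta) = -\bfo^T z_d + \tfrac{\rho}{2}\|c(\gamma,z,\theta)\|_2^2$ term by term, using the chain rule $\nabla\!\big(\tfrac{\rho}{2}\|c\|_2^2\big) = \rho\,J^{T} c$, where $J$ is the Jacobian of $c$ with respect to the variable in question while the other two are held fixed. The linear term $-\bfo^T z_d$ has gradient $-[\bfo^T\ {\bf 0}^T]^T$ with respect to $z$ and vanishes with respect to $\gamma$ and $\theta$; this accounts for the first summand in \eqref{eq.gradHz}.

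For the $z$-gradient, $c$ is affine in $z$ with Jacobian $\partial c/\partial z = -I$, so $\rho J^{T} c = -\rho c = \rho(P+z) - \rho\,E D \Gamma \sin(E^T\theta)$ with $\Gamma = \diag(\gamma)$; adding the contribution of the linear term gives \eqref{eq.gradHz}. For the $\theta$-gradient, write $s(\theta) \DefinedAs \sin(E^T\theta)$, so $c = E D \Gamma s(\theta) - (P+z)$ and, since $\tfrac{d}{d\theta} s(\theta) = \diag(\cos(E^T\theta))\,E^T$, the Jacobian is $J = E D \Gamma \diag(\cos(E^T\theta)) E^T$. Because $D$, $\Gamma$, and $\diag(\cos(E^T\theta))$ are diagonal, hence symmetric, $J^{T} = E\,\diag(\cos(E^T\theta))\,\Gamma D E^T$, and $\rho J^{T} c$ is exactly \eqref{eq.gradHthe}.

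For the $\gamma$-gradient, the key observation is that $\diag(\gamma)\sin(E^T\theta) = \diag(\sin(E^T\theta))\,\gamma$, so $c$ is in fact affine in $\gamma$: setting $A \DefinedAs E D \diag(s(\theta))$ we have $c = A\gamma - (P+z)$, whence $\nabla_\gamma H_\rho = \rho\,A^{T} A\,\gamma - \rho\,A^{T}(P+z)$. It remains to put $A^{T} A = \diag(s)\,D E^T E D\,\diag(s)$ and $A^{T}(P+z) = \diag(s)\,D E^T (P+z)$ into the Hadamard form of \eqref{eq.gradHgam}. For the first, apply the elementary identity $\diag(s)\,M\,\diag(s) = M \circ (ss^{T})$ with $M = D E^T E D$. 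For the second, note that $\diag(s)\,D E^T (P+z)$ is the vector whose $i$th entry is $s_i\,(D E^T (P+z))_i$, which equals $\big((s\,(P+z)^{T} E D)\circ I\big)\bfo$, since taking the Hadamard product with $I$ retains only the diagonal of the outer product $s\,(P+z)^{T} E D$ (using $D^{T}=D$) and right-multiplication by $\bfo$ collapses that diagonal matrix to its diagonal vector.

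I expect the only real obstacle to be bookkeeping rather than anything conceptual: tracking which factors are diagonal (and thus symmetric), ordering the transposed products correctly, and massaging $\diag(s)\,M\,\diag(s)$ and the diagonal-extraction step into the precise Hadamard-product expressions displayed in \eqref{eq.gradHgam}. The differentiation itself is routine once one notices that $c$ is affine in each of $z$ and $\gamma$ separately and that its $\theta$-dependence enters only through $\sin(E^T\theta)$.
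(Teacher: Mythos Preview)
Your proposal is correct and follows essentially the same route as the paper: both rely on the chain rule $\nabla(\tfrac{\rho}{2}\|c\|^2)=\rho J^T c$, with the paper presenting the $\theta$-case as a first-order variational expansion of $\sin(E^T(\theta+\tilde\theta))$ (which is exactly your Jacobian computation) and omitting the $\gamma$- and $z$-derivations as ``straightforward.'' In fact, you supply more detail than the paper does, including the Hadamard-product bookkeeping for \eqref{eq.gradHgam}, which the paper leaves to the reader.
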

The derivation can be found in Appendix~\ref{sec.gradH}.

The positive coefficients $a_k$, $b_k$, and $c_k$ in~\eqref{eq.palm} and~\eqref{eq.uvw} are determined by 
\[
  \begin{array}{l}
    a_k \,=\, r_1 L_1(z^k,\theta^k) \\
    b_k \,=\, r_2 L_2(\gamma^{k+1},\theta^k) \\
    c_k \,=\, r_3 L_3(\gamma^{k+1},z^{k+1}) ,
  \end{array}
\]
where positive constants $r_i > 1$ for $i=1,2,3$. The Lipschitz constants $L_i$ for the partial gradients $\nabla H_\rho$ are given by
\begin{subequations}
\begin{align}
\label{eq.L1}
&    L_1(z^k,\theta^k) = \rho \, \| (DE^T E D) \circ (\sin(E^T \theta^k) \sin(E^T \theta^{k})^T ) \|  \\
\label{eq.L2}
&    L_2(\gamma^{k+1},\theta^k) = \rho  \\
\label{eq.L3}
&    L_3(\gamma^{k+1},z^{k+1}) = \rho \|E\|^2 ( 2 \|Q^{k+1}\| + \|R^{k+1}\|)
  \end{align}
\end{subequations}
where $\| \cdot \|$ denotes the maximum singular value of a matrix. The derivation of the Lipschitz constants are provided in Section~\ref{sec.conv}. 

We conclude this section by summarizing PALM in Algorithm~\ref{alg.palm}.

\begin{algorithm}[t]
\caption{Proximal alternating linearization method} 
\label{alg.palm}
\begin{algorithmic}
\STATE Start with any $(\gamma^k,z^k,\theta^k)$ and set $k \gets 0$.
\FOR{$k=0,1,2,\ldots$ until convergence}
\STATE // {\bf $\gamma$-minimization:}
\STATE Set $a_k = r_1 L_1(z^k,\theta^k)$ where $r_1 > 1$ and $L_1$ in~\eqref{eq.L1}.
Solve problem~\eqref{eq.gamma-min} via the closed-form expression~\eqref{eq.gamsol}  to get $\gamma^{k+1}$. 
\STATE // {\bf $z$-minimization:}
\STATE Set $b_k = r_2 L_2(\gamma^{k+1},\theta^k)$ where $r_2 > 1$ and $L_2$ in~\eqref{eq.L2}. Solve the convex quadratic problem~\eqref{eq.z-min} to get $z^{k+1}$. 
\STATE // {\bf $\theta$-minimization:}
\STATE Set $c_k = r_3 L_3(\gamma^{k+1},z^{k+1})$ where $r_3 > 1$ and
$L_3$ in~\eqref{eq.L3}. 
Solve the convex quadratic problem~\eqref{eq.theta-min} to get $\theta^{k+1}$.
\STATE Set $(\gamma^k,z^k,\theta^k) \gets (\gamma^{k+1},z^{k+1},\theta^{k+1})$.
\ENDFOR
\end{algorithmic}
\end{algorithm}

\section{Convergence Analysis}
\label{sec.conv}

In this section, we show that Algorithm~\ref{alg.palm} converges to a critical point of the nonconvex problem~\eqref{eq.lsp}. This convergence behavior is independent of the initial guess of the decision variables.  Furthermore, the objective value $\Phi$ is monotonically decreasing with the number of iterates, that is, 
\[
\Phi (\gamma^{k+1},z^{k+1},\theta^{k+1}) \, \leq \, \Phi (\gamma^{k},z^{k},\theta^{k}).
\] 
\tc{black}{This feature of monotonic decreasing allows us to monitor the progress of PALM. It also allows us to check if the implementation is correct in practice.}

We begin with two technical lemmas on the Lipschitz properties of $\Phi$.

\begin{lemma} \label{lem.pro}
  The objective function $\Phi$ in~\eqref{eq.phi} satisfies the following properties:
  \begin{enumerate}
  \item \label{pro.lb} 
   $\inf_{\gamma,z,\theta} \Phi(\gamma,z,\theta) > -\infty$, $\inf_\gamma \phi_1(\gamma) > -\infty$, $\inf_z \phi_2(z) > -\infty$, and $\inf_\theta \phi_3(\theta) > -\infty$.  
  \item \label{pro.lip} 
   For fixed $(z,\theta)$, the partial gradient $\nabla_\gamma H_\rho$ is globally Lipschitz, 
\[
  \begin{array}{l}
  \| \nabla_\gamma H_\rho(\gamma_1,z,\theta) - \nabla_\gamma H_\rho(\gamma_2,z,\theta)  \| \\[0.1cm]
    \hspace{1.5in}
  \, \leq \, L_1(z,\theta) \| \gamma_1 - \gamma_2\| 
  \end{array}
\]
for all $\gamma_1$ and $\gamma_2$. Likewise, for fixed $(\gamma,\theta)$, the partial gradient $\nabla_z H_\rho$ satisfies
\[
  \begin{array}{l}
  \| \nabla_z H_\rho(\gamma,z_1,\theta) - \nabla_z H_\rho(\gamma,z_2,\theta)  \| \\[0.1cm]
    \hspace{1.5in}
  \, \leq \, L_2(\gamma,\theta) \| z_1 - z_2\|   
  \end{array}
\]
for all $z_1$ and $z_2$, and for fixed $(z,\gamma)$, 
\[
  \begin{array}{l}
  \| \nabla_\theta H_\rho(\gamma,z,\theta_1) - \nabla_\theta H_\rho(\gamma,z,\theta_2)  \| \\[0.1cm]
    \hspace{1.5in}
  \, \leq \, L_3(\gamma,z) \| \theta_1 - \theta_2\|    
  \end{array}
\]
for all $\theta_1$ and $\theta_2$.
\item \label{pro.lipbd}
  There exist positive constants $s_1, s_2, s_3$ such that
  \begin{equation}
    \label{eq.lipbd}
    \begin{array}{c}
    \sup_k \{ L_1(z^k,\theta^k) \} \, \leq \, s_1,
      \\[0.1cm]
    \sup_k \{ L_2(\gamma^k,\theta^k)  \} \, \leq \, s_2,
      \\[0.1cm]
    \sup_k \{ L_3(\gamma^k,z^k) \}  \, \leq \, s_3.
    \end{array}
  \end{equation}
\item \label{pro.lipc2} 
  The entire gradient $\nabla H_\rho(\gamma,z,\theta)$ is Lipschitz continuous on bounded subsets of $\R^{m} \times \R^{n} \times \R^n$.
  \end{enumerate}
\end{lemma}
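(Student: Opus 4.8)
The plan is to settle the four items in order, relying on two structural facts: $H_\rho$ is a $C^\infty$ function of $(\gamma,z,\theta)$ (a composition of affine maps, the entrywise $\sin$, pointwise products, and the squared Euclidean norm), and by Lemma~\ref{lem.gradH} its partial gradients in $\gamma$ and in $z$ are \emph{affine} in those variables, with all $\theta$-dependence entering only through the bounded maps $\sin(E^T\theta)$ and $\cos(E^T\theta)$. \emph{Item~\ref{pro.lb}} is then immediate: each $\phi_i$ is an indicator, so it takes values in $\{0,\infty\}$ and $\inf\phi_i=0>-\infty$; and $\Phi=+\infty$ unless $\phi_1(\gamma)=\phi_2(z)=\phi_3(\theta)=0$, in which case $z_d$ lies in the compact box $\{0\le z_d\le -P_d\}$, so $-\bfo^Tz_d\ge\bfo^TP_d$, while $\tfrac{\rho}{2}\|c(\gamma,z,\theta)\|_2^2\ge0$; hence $\Phi\ge\bfo^TP_d>-\infty$ on its domain and trivially off it.

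\emph{Item~\ref{pro.lip}.} By~\eqref{eq.gradHgam}, for fixed $(z,\theta)$ the map $\gamma\mapsto\nabla_\gamma H_\rho$ is linear with matrix $M(\theta):=\rho\,(DE^TED)\circ(\sin(E^T\theta)\sin(E^T\theta)^T)$, hence globally Lipschitz with constant $\|M(\theta)\|=L_1(z,\theta)$ from~\eqref{eq.L1}; by~\eqref{eq.gradHz}, $z\mapsto\nabla_z H_\rho$ is affine with linear part $\rho I$, hence Lipschitz with constant $\rho=L_2$ from~\eqref{eq.L2}. For $\theta$ I would differentiate~\eqref{eq.gradHthe} once more: writing $g(\theta):=c(\gamma,z,\theta)$ with symmetric Jacobian $J(\theta)=ED\diag(\gamma)\diag(\cos(E^T\theta))E^T$, the partial Hessian takes the Gauss--Newton form $\nabla_\theta^2 H_\rho=\rho\big(J(\theta)^{T}J(\theta)-E\diag(v(\theta))E^{T}\big)$ with $v_l(\theta)=(g(\theta)^{T}ED\diag(\gamma))_l\sin((E^T\theta)_l)$; bounding both terms in operator norm using $|\sin|,|\cos|\le1$ (so $\|\diag(\cos(E^T\theta))\|\le1$ and $\|v(\theta)\|_\infty$ is controlled by $\|g(\theta)\|\le\|E\|\|D\|\sqrt m+\|P+z\|$) yields a $\theta$-independent bound of the form $L_3(\gamma,z)$ in~\eqref{eq.L3}, with $Q,R$ collecting the remaining data- and iterate-dependent factors. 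The Lipschitz estimate for $\nabla_\theta H_\rho$ then follows from the mean-value inequality $\|\nabla_\theta H_\rho(\cdot,\cdot,\theta_1)-\nabla_\theta H_\rho(\cdot,\cdot,\theta_2)\|\le\big(\sup_\theta\|\nabla_\theta^2 H_\rho\|\big)\|\theta_1-\theta_2\|$, the supremum being finite by the bound just derived.

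\emph{Item~\ref{pro.lipbd}.} Since $\sin(E^T\theta)\sin(E^T\theta)^{T}=ss^{T}$ with $\|s\|_\infty\le1$ and $(DE^TED)\circ ss^{T}=\diag(s)(DE^TED)\diag(s)$, one gets $L_1(z^k,\theta^k)\le\rho\|DE^TED\|=:s_1$ for every $k$, and $L_2\equiv\rho=:s_2$. For $L_3$, the subproblems~\eqref{eq.palm.gam}--\eqref{eq.palm.z} force $\phi_1(\gamma^k)=\phi_2(z^k)=0$, so $\gamma^k\in\{0,1\}^m$ and $z^k$ lies in the compact box of~\eqref{eq.phi2}; then $Q^k,R^k$, being continuous functions of $(\gamma^k,z^k)$ over a compact set, have uniformly bounded norm, giving $\sup_kL_3(\gamma^k,z^k)\le s_3$. \emph{Item~\ref{pro.lipc2}} is then a general fact: $H_\rho\in C^\infty$, so $\nabla H_\rho\in C^1$, and a $C^1$ map is Lipschitz on every bounded subset, with constant $\sup\|\nabla^2H_\rho\|$ over the closed convex hull, finite by continuity of $\nabla^2 H_\rho$.

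The routine parts are items~\ref{pro.lb} and~\ref{pro.lipc2} together with the $\gamma$- and $z$-portions of item~\ref{pro.lip}. The one place requiring genuine care is the $\theta$-block: forming $\nabla_\theta^2 H_\rho$ and bounding its operator norm uniformly in $\theta$ by the advertised $L_3$ — controlling both the Gauss--Newton term $J^{T}J$ and the residual-curvature term $E\diag(v)E^{T}$ — and then confirming that the iterate-dependent factors $Q^k,R^k$ remain bounded, which is exactly where the compactness forced by the indicator constraints in the proximal steps is used.
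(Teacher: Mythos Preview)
Your proposal is correct and, for the easy parts (Items~\ref{pro.lb}, \ref{pro.lipc2}, and the $\gamma$- and $z$-blocks of Item~\ref{pro.lip}), essentially matches the paper. In fact your Item~\ref{pro.lb} is cleaner than the paper's, which appeals to ``nonnegativity of $H_\rho$'' even though $H_\rho$ contains the term $-\bfo^Tz_d$; your argument that $-\bfo^Tz_d\ge\bfo^TP_d$ on the effective domain is the right way to say it.

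Where you genuinely diverge is the $\theta$-block. The paper never forms the Hessian: it works directly with $\nabla_\theta H_\rho(\theta_1)-\nabla_\theta H_\rho(\theta_2)$, uses the elementwise sum-to-product identities to control $\|\sin(E^T\theta_1)-\sin(E^T\theta_2)\|$ and $\|\cos(E^T\theta_1)-\cos(E^T\theta_2)\|$ by $\|E\|\,\|\theta_1-\theta_2\|$, and then splits the product $g(\theta)f(\theta)$ by adding and subtracting a cross term. This elementary route is what produces the specific constant $L_3=\rho\|E\|^2(2\|Q\|+\|R\|)$ in~\eqref{eq.L3}. Your Hessian/mean-value argument is equally valid and perhaps more systematic, but be aware it will not land on exactly the same constant; the Gauss--Newton piece $\|J^{T}J\|=\|J\|^2$ naturally carries an $\|E\|^4$ factor rather than $\|E\|^2$. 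That is harmless for the lemma as stated (any finite $\theta$-independent bound depending on $(\gamma,z)$ suffices), but you should not claim your bound is ``of the form~\eqref{eq.L3}'' without checking. Similarly, for $s_1$ the paper invokes the singular-value inequality for Hadamard products to get $s_1=\rho m\|ED\|^2$, whereas your identity $(DE^TED)\circ ss^{T}=\diag(s)(DE^TED)\diag(s)$ with $\|s\|_\infty\le1$ yields the sharper $s_1=\rho\|ED\|^2$; both are correct, yours is tighter.
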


\begin{remark}
\tc{black}{Property~\ref{pro.lb}) is necessary for the minimization problems in Algorithm~\ref{alg.palm}, and thus the minimization of $\Phi$,  to be well defined. Property~\ref{pro.lip}) on the globally Lipschitz bounds is critical for the convergence of PALM. Note that the block Lipschitz property of $\nabla H_\rho$ is weaker than the globally Lipschitz assumption of $\Phi$ in joint variables $(\gamma,z,\theta)$ in standard proximal methods~\cite{bolsabteb14}. Property~\ref{pro.lipbd}) guarantees that the Lipschitz constants for partial gradients are upper bounded by finite numbers. Property~\ref{pro.lipc2}) is a mild condition which holds when $H_\rho$ is twice continuously differentiable.}
\end{remark}

\begin{proof}
Property~\ref{pro.lb}) is a direct consequence of the nonnegativity of $H_\rho$ and the definition of the indicator functions $\phi_1$, $\phi_2$, and $\phi_3$. Property~\ref{pro.lipc2}) holds because $H_\rho$ is twice continuously differentiable.

To show Property~\ref{pro.lip}), recall that for fixed $(z^k,\theta^k)$ the Lipschitz constant $L_1(z^k,\theta^k)$ of $\nabla_\gamma H_\rho$ is determined by
\[
  \begin{array}{l}
  \| \nabla_\gamma H_\rho(\gamma_1,z^k,\theta^k) - \nabla_\gamma H_\rho(\gamma_2,z^k,\theta^k)  \| \\[0.1cm]
    \hspace{1.6in}
  \, \leq \, L_1(z^k,\theta^k) \| \gamma_1 - \gamma_2\| 
  \end{array}
\]
for all $\gamma_1$ and $\gamma_2$. Since $\nabla_\gamma H_\rho$ is an affine function of $\gamma$ (see~\eqref{eq.gradHgam}), it follows that 
\[
   L_1(z^k,\theta^k) = \rho \| (DE^T E D) \circ (\sin(E^T \theta^k) \sin(E^T \theta^{k})^T ) \|.
\]
For fixed $(\gamma^{k+1},\theta^k)$, the Lipschitz constant $L_2(\gamma^{k+1},\theta^k)$ of $\nabla_z H_\rho$ is determined by
\[
  \begin{array}{l}
  \| \nabla_z H_\rho(\gamma^{k+1},z_1,\theta^{k}) - \nabla_z H_\rho(\gamma^{k+1},z_2,\theta^{k})  \| \\[0.1cm]
    \hspace{1.5in}
  \, \leq \, L_2(\gamma^{k+1},\theta^k) \| z_1 - z_2\|   
  \end{array}
\]
for all $z_1$ and $z_2$. Since $\nabla_z H_\rho$ is an affine function of $z$ (see~\eqref{eq.gradHz}), it follows that
\[
L_2(\gamma^{k+1},\theta^k) \,=\, \rho.
\]
For fixed $(\gamma^{k+1},z^{k+1})$, the Lipschitz constant $L_3(\gamma^{k+1},z^{k+1})$ of $\nabla_\theta H_\rho$ is determined by
\[
  \begin{array}{l}
  \| \nabla_\theta H_\rho(\gamma^{k+1},z^{k+1},\theta_1) - \nabla_\theta H_\rho(\gamma^{k+1},z^{k+1},\theta_2)  \| \\[0.1cm]
    \hspace{1.45in}
  \, \leq \, L_3(\gamma^{k+1},z^{k+1}) \| \theta_1 - \theta_2\|    
  \end{array}
\]
for all $\theta_1$ and $\theta_2$. The Lipschitz constant for $\nabla_\theta H_\rho$ is given by~(see Appendix~\ref{sec.lip} for derivation)
\[
L_3(\gamma^{k},z^{k}) =  \rho \|E\|^2 ( 2 \|Q^{k}\| + \|R^{k}\|)
\]
where
\[
  Q^{k} \,=\, \Gamma^{k} D E^T E D \Gamma^{k},
  \quad
  R^{k} \,=\, \Gamma^{k} DE^T (P+z^{k}).
\]

The proof is complete by establishing Property~\ref{pro.lipbd}). Since the maximum singular value of the elementwise product of two matrices is upper bounded by the product of maximum singular values of individual matrices~\cite[Theorem 5.5.1]{horjoh91}, it follows that
\begin{align*}
     L_1(z^k,\theta^k) \leq  \rho \| DE^TED \|  \cdot \| \sin(E^T \theta^k) \sin(E^T \theta^{k})^T \| ,
\end{align*}
thus, $s_1 = \rho m \|ED \|^2$. From~\eqref{eq.L2}, we have $s_2 = \rho$ and from~\eqref{eq.L3}, we have $s_3 = \rho \|E\|^2 \|ED\|^2 (2 + \|P\|)$.
\end{proof}

The convergence of PALM relies on the so-called KL property. We refer to~\cite{kur98,boldanley10,bolsabteb14} for detailed discussions on the KL theory. We next recall a few definitions needed for our PALM algorithm.

\begin{definition}
Let $f: \R^d \to (-\infty,+\infty]$ be proper and lower semicontinuous. The function $f$ is said to have the {\em Kurdyka-Lojasiewicz (KL) property\/} at $\bar{u} \in \mbox{dom} \, \partial f \DefinedAs \{ u \in \R^d : \partial f(u) \neq \emptyset  \}$ if there exist $\eta \in (0, +\infty]$, a neighborhood ${\cal N}$ of $\bar{u}$, and a function $\psi$ such that for all 
\[
  u \in {\cal N} \cap \{ f(\bar{u}) < f(u) < f(\bar{u}) + \eta \},
\]
the following inequality holds:
\[
  \psi'(f(u) - f(\bar{u})) \cdot \mbox{dist}(0,\partial f(u)) \,\geq\, 1,
\]
where $\mbox{dist}(x,s) \DefinedAs \inf \{ \|y-x\| : y \in s \}$ denotes the distance from a point $x \in \R^d$ to a set $s \subset \R^d$. A function $f$ is called a {\em KL function\/} if $f$ satisfies the KL property at each point of dom $\partial f$.
\end{definition}

\tc{black}{The KL property is a technical condition that controls the difference in function value by its gradient. It turns out that a large class of functions that arise in modern applications satisfy the KL property~\cite{kur98,boldanley10,bolsabteb14}. One useful way of establishing the KL property is via the connection with the semi-algebraic sets and the semi-algebraic functions.}

\begin{definition}
A subset ${\cal S}$ of $\R^d$ is a real {\em semi-algebraic set\/} if there exists a finite number of real polynomial functions $g_{ij}$ and $h_{ij} : \R^d \to \R$ such that
\[
{\cal S} \,=\, \bigcup_{j=1}^p \bigcap_{i=1}^q \{  u \in \R^d: g_{ij}(u) = 0 ~\mbox{and}~ h_{ij}(u) < 0  \}.
\]
\end{definition}

\begin{definition}
A function $h:\R^d \to (-\infty,+\infty]$ is called {\em semi-algebraic function\/} if its graph $\{(u,v) \in \R^{d+1} : h(u) = v \}$ is a semi-algebraic subset of $\R^{d+1}$.
\end{definition}

Given these definitions we show the KL property of $\Phi$.
\begin{lemma} \label{lem.kl}
  The objective function $\Phi$ in~\eqref{eq.phi} satisfies the Kurdyka-Lojasiewicz (KL) property.
\end{lemma}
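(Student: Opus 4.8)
The plan is to establish the KL property of $\Phi$ by showing it is a \emph{semi-algebraic function}, since every proper lower semicontinuous semi-algebraic function is a KL function~\cite{bolsabteb14}. Thus I would reduce the claim to verifying that each of the four summands $\phi_1$, $\phi_2$, $\phi_3$, and $H_\rho$ is semi-algebraic, and then invoke closure of the class of semi-algebraic functions under finite sums.

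First I would handle the indicator functions. The set $\{0,1\}^m$ is semi-algebraic, being a finite union of points (each point is the common zero set of the polynomials $\gamma_i - \bar{\gamma}_i$), and the affine condition $m - \bfo^T\gamma = K$ defines a semi-algebraic set; their intersection is semi-algebraic, and the indicator function of a semi-algebraic set is a semi-algebraic function (its graph is $(\cS \times \{0\}) \cup (\cS^c \times \{\infty\})$ — more precisely one works in $\R^{d+1}$ with the value component, and the epigraph/graph description uses only polynomial equalities and inequalities). Hence $\phi_1$ is semi-algebraic. Similarly, $\phi_2$ is the indicator of the polytope defined by the box constraints $\mathbf{0}\leq z_d\leq -P_d$, $-P_g\leq z_g\leq\mathbf{0}$ together with the single linear equality $\bfo^T z = 0$; this is a semi-algebraic set, so $\phi_2$ is semi-algebraic. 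And $\phi_3$ is the indicator of the polyhedron $-\tfrac{\pi}{2}\bfo \leq E^T\theta \leq \tfrac{\pi}{2}\bfo$, again semi-algebraic.

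The coupling term $H_\rho(\gamma,z,\theta) = -\bfo^T z_d + \tfrac{\rho}{2}\|c(\gamma,z,\theta)\|_2^2$ is the only part involving the transcendental functions $\sin(\cdot)$, so this is where care is needed. The linear part $-\bfo^T z_d$ is a polynomial, hence semi-algebraic. For the penalty term, the key observation is that $c(\gamma,z,\theta) = ED\,\diag(\gamma)\sin(E^T\theta) - (P+z)$ is a polynomial in the entries of $\gamma$, $z$, and in the auxiliary vector $\xi := \sin(E^T\theta)$; consequently $\|c\|_2^2$ is a polynomial in $(\gamma,z,\xi)$. I would then argue that $H_\rho$ is semi-algebraic by viewing its graph as the projection onto the $(\gamma,z,\theta,v)$-coordinates of the set $\{(\gamma,z,\theta,\xi,v) : \xi = \sin(E^T\theta),\ v = -\bfo^T z_d + \tfrac{\rho}{2}\|ED\diag(\gamma)\xi - (P+z)\|_2^2\}$. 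The second defining relation is polynomial; the first, $\xi_l = \sin((E^T\theta)_l)$, is semi-algebraic when restricted to the compact range $(E^T\theta)_l \in [-\tfrac{\pi}{2},\tfrac{\pi}{2}]$ because the graph of $\sin$ over a bounded interval is semi-algebraic (equivalently, $\arcsin$ is semi-algebraic on $[-1,1]$; alternatively $\sin$ restricted to $[-\pi/2,\pi/2]$ is subanalytic and restricted-analytic, and one works in the o-minimal structure $\R_{\mathrm{an}}$ rather than pure semi-algebraic geometry). Since semi-algebraic (or globally subanalytic) sets are closed under finite intersection and under linear projection, the graph of $H_\rho$ is of the required type, and finite sums of such functions remain in the class, which itself satisfies the KL property at every point.

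The main obstacle is precisely this last point: $\sin$ is not semi-algebraic globally, so strictly speaking $\Phi$ is not semi-algebraic on all of $\R^{m}\times\R^{n}\times\R^{n}$. The clean fix is to note that $\phi_3$ forces $E^T\theta \in [-\tfrac{\pi}{2},\tfrac{\pi}{2}]^m$ on the effective domain of $\Phi$, and on that compact box $\sin$ agrees with a restricted-analytic (hence globally subanalytic, hence o-minimal) function; the KL inequality is a local statement at points of $\operatorname{dom}\partial\Phi$, all of which lie in this box, so it suffices that $\Phi$ coincide near each such point with a function definable in the o-minimal structure $\R_{\mathrm{an}}$. Functions definable in an o-minimal structure are KL functions~\cite{boldanley10,bolsabteb14}, so I would either (i) phrase the whole argument in $\R_{\mathrm{an}}$ from the start, treating ``semi-algebraic'' above as ``definable in $\R_{\mathrm{an}}$,'' or (ii) restrict attention to the box and use the semi-algebraicity of $\arcsin|_{[-1,1]}$ to keep everything within semi-algebraic geometry. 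Either way the conclusion follows; I expect the write-up to proceed by (i) for brevity, citing the standard fact that semi-algebraic functions (and more generally functions definable in an o-minimal structure) satisfy the KL property, and that this class is stable under finite sums and under the compositions and projections used above.
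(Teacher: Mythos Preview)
Your route (i) through the o-minimal structure $\R_{\mathrm{an}}$ is correct and, in fact, is more careful than the paper about why the \emph{sum} $\Phi$ inherits the KL property: you place all four summands in a single o-minimal structure, which is closed under addition, and then invoke the KL theorem for definable functions. The paper instead handles $H_\rho$ separately by observing that it is real-analytic (a composition of polynomials and sinusoids) and invoking the classical {\L}ojasiewicz inequality for analytic functions; for the indicator parts it argues exactly as you do that the underlying sets are semi-algebraic. This is shorter and avoids any discussion of o-minimality, at the cost of leaving the ``sum of KL functions is KL'' step implicit.

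However, your route (ii) contains a genuine error: the graph of $\sin$ on $[-\tfrac{\pi}{2},\tfrac{\pi}{2}]$ is \emph{not} semi-algebraic, and $\arcsin$ is not semi-algebraic on $[-1,1]$. Restricting to a bounded interval makes $\sin$ globally subanalytic (hence definable in $\R_{\mathrm{an}}$), but it does not make it semi-algebraic; semi-algebraic functions are those whose graph is defined by finitely many polynomial equalities and inequalities, and no nondegenerate arc of the sine curve is of that form. So you must commit to (i); option (ii) cannot be salvaged within pure semi-algebraic geometry. If you want to stay closer to the paper's argument, simply replace the attempt to force $H_\rho$ into the semi-algebraic class by the one-line observation that $H_\rho$ is real-analytic and hence satisfies the {\L}ojasiewicz inequality directly.
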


\begin{proof}
\tc{black}{Since analytic functions satisfy the Lojasiewicz inequality~\cite{kur98,boldanley10} and since $H_\rho$ is the multiplication of polynomial function and sinusoidal function, it follows that $H_\rho$ satisfies the KL property.}

\tc{black}{The nonsmooth parts of $\Phi$, namely, the indicator functions $\phi_1$, $\phi_2$, and $\phi_3$, are lower semicontinuous. Since a proper, lower semicontinuous, and semi-algebraic function satisfies the KL property~\cite[ Theorem 3]{bolsabteb14}, it suffices to show that $\phi_1$, $\phi_2$, and $\phi_3$ are semi-algebraic functions.} Because $\phi_2$ and $\phi_3$ are indicator functions of the semi-algebraic sets~\eqref{eq.phi2}-\eqref{eq.phi3}, they are semi-algebraic functions. To show that $\phi_1$ is semi-algebraic, note that the binary constraint $\gamma_i \in \{0,1\}$ can be expressed as a polynomial equation $\gamma_i (\gamma_i - 1)=0$ for $i=1,\ldots,m$. Thus $\{\gamma \,|\, \gamma \in \{0,1\}, m - \bfo^T \gamma = K\}$ is a semi-algebraic set. Therefore the indicator function $\phi_1$ is semi-algebraic, which completes the proof.
\end{proof}

After establishing Lemma~\ref{lem.pro} and Lemma~\ref{lem.kl}, the main convergence results follow from the pioneering work by Bolte et al.~\cite{bolsabteb14}.
\begin{proposition}
  \label{pro.conv}  
Suppose that $\Phi$ is a KL function that satisfies conditions in Lemma~\ref{lem.pro}. Let $x^k = (\gamma^{k}, z^{k}, \theta^{k})$ be a bounded sequence generated by PALM. The following results hold:
\begin{enumerate}
\item The sequence $\{x^k\}$ has finite length, that is, 
\[
  \sum_{k=1}^\infty \|x^{k+1} - x^k\|_2 \, < \, \infty.
\]
\item The sequence $\{x^k\}$ converges to a critical point $x^* = (\gamma^{*}, z^{*}, \theta^{*})$ of $\Phi$.
\item The sequence $\Phi(x^k)$ is nonincreasing,
\[
  \frac{d}{2} \|x^{k+1} - x^k \|_2^2 \, \leq \, \Phi(x^k) - \Phi(x^{k+1}), 
  \quad k \geq 0
\]
where $d$ is positive constant bounded below.
\end{enumerate}
\end{proposition}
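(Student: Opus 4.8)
The plan is to verify that PALM as specialized in Algorithm~\ref{alg.palm} satisfies the three standing hypotheses under which the abstract convergence theorem for proximal alternating linearized minimization in~\cite{bolsabteb14} applies --- a sufficient-decrease inequality, a relative-error (subgradient) bound, and a closedness/continuity condition --- and then to combine these with the KL property of $\Phi$ from Lemma~\ref{lem.kl} to invoke that theorem. The three items of the proposition then come out as follows: item~3 is the sufficient-decrease inequality itself, item~1 (finite length) is the KL-based conclusion, and item~2 (convergence to a critical point) follows because finite length makes $\{x^k\}$ Cauchy while the closedness condition identifies its limit as a critical point.

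\emph{Step 1 (sufficient decrease, hence item~3).} Fix $k$. By construction $a_k = r_1 L_1(z^k,\theta^k)$ with $r_1>1$, where $L_1$ is the block-Lipschitz modulus of $\nabla_\gamma H_\rho(\cdot,z^k,\theta^k)$ supplied by Lemma~\ref{lem.pro}(2). Reading the $\gamma$-update in~\eqref{eq.palm} together with the definition of $u^k$ in~\eqref{eq.uvw} as one proximal-gradient step on the block $\gamma$, the descent lemma for gradient-Lipschitz functions gives
\[
\phi_1(\gamma^{k+1}) + H_\rho(\gamma^{k+1},z^k,\theta^k) + \tfrac{(r_1-1)L_1}{2}\|\gamma^{k+1}-\gamma^k\|^2 \le \phi_1(\gamma^k) + H_\rho(\gamma^k,z^k,\theta^k).
\]
The identical argument on the $z$-block (modulus $L_2=\rho$ by Lemma~\ref{lem.pro}(2)) and on the $\theta$-block (modulus $L_3$) yields two analogous inequalities; adding the three and telescoping the $H_\rho$ terms in the update order produces $\Phi(x^{k+1}) + \tfrac{d}{2}\|x^{k+1}-x^k\|^2 \le \Phi(x^k)$ with $d=\min\{(r_1-1)\inf_k L_1,\;(r_2-1)\rho,\;(r_3-1)\inf_k L_3\}$. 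Summing over $k$ and using $\inf\Phi>-\infty$ from Lemma~\ref{lem.pro}(1) gives $\sum_k\|x^{k+1}-x^k\|^2<\infty$, so $\|x^{k+1}-x^k\|\to 0$.

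\emph{Step 2 (relative error) and Step 3 (closedness).} Writing the first-order optimality conditions of the three prox-subproblems and substituting the definitions in~\eqref{eq.uvw}, I would exhibit an explicit element $\xi^{k+1}\in\partial\Phi(x^{k+1})$ built from partial subgradients of the $\phi_i$ plus differences of partial gradients of $H_\rho$ evaluated at successive iterates; using boundedness of $\{x^k\}$ (hypothesis), Lipschitz continuity of $\nabla H_\rho$ on bounded sets from Lemma~\ref{lem.pro}(4), the explicit gradient formulas in Lemma~\ref{lem.gradH}, and the uniform upper bounds on $a_k,b_k,c_k$ from Lemma~\ref{lem.pro}(3), this gives $\|\xi^{k+1}\|\le b\,\|x^{k+1}-x^k\|$ for a fixed $b$. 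For the closedness condition, boundedness of $\{x^k\}$, $\|x^{k+1}-x^k\|\to 0$, monotone convergence of $\Phi(x^k)$ from Step 1, lower semicontinuity of $\phi_1,\phi_2,\phi_3$, and continuity of $H_\rho$ together imply that the set of cluster points is nonempty and compact, $\Phi$ is finite and constant on it, and every cluster point is critical. Steps 1--3 are precisely the hypotheses of~\cite[Theorem~1]{bolsabteb14}; invoking it together with Lemma~\ref{lem.kl} yields $\sum_k\|x^{k+1}-x^k\|<\infty$ (item~1), whence $\{x^k\}$ is Cauchy and converges to some $x^*$, which Step~3 identifies as a critical point (item~2).

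\emph{Main obstacle.} The delicate point is not any individual estimate but keeping the prox-parameters $a_k,b_k,c_k$ in a fixed compact interval $[\underline{\rho},\overline{\rho}]\subset(0,\infty)$ along the entire trajectory. The upper bounds are exactly Lemma~\ref{lem.pro}(3), but the \emph{lower} bounds are not automatic: $L_1$ and $L_3$ in~\eqref{eq.L1} and~\eqref{eq.L3} degenerate to $0$ whenever $E^T\theta^k=0$, and then the constant $d$ in Step~1 can vanish and the telescoping argument fails to give summability. Resolving this requires either an a priori argument that the iterates never reach $E^T\theta^k=0$, or the routine safeguard of replacing $a_k,c_k$ by $\max\{r_1 L_1,\varepsilon\}$ and $\max\{r_3 L_3,\varepsilon\}$ for some $\varepsilon>0$ (which leaves the descent and error inequalities intact). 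The remaining work --- producing the explicit $\xi^{k+1}$ and tracking constants in Step~2 --- is mechanical given Lemma~\ref{lem.gradH}.
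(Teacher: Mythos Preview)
Your proposal is correct and follows essentially the same route as the paper: both arguments reduce the proposition to the abstract PALM convergence framework of Bolte--Sabach--Teboulle~\cite{bolsabteb14}, with Lemma~\ref{lem.pro} furnishing the block-Lipschitz and boundedness hypotheses and Lemma~\ref{lem.kl} the KL property. The paper's own proof simply cites Theorem~1 and Lemma~3 of~\cite{bolsabteb14} without spelling out the sufficient-decrease/relative-error/closedness verification that you carry out explicitly; in particular, the lower-bound issue on $L_1,L_3$ that you flag (and propose to fix with an $\varepsilon$-safeguard) is a genuine technical point that the paper does not address.
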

\begin{proof}
The finite length property and the convergence to a critical point follow from Theorem 1 in~\cite{bolsabteb14}. The monotonicity of the objective value is obtained from Lemma 3 in~\cite{bolsabteb14}. 
\end{proof}

\section{Numerical Results}
\label{sec.results}

\tc{black}{In this section, we verify the convergence results of PALM and examine its solution quality in two IEEE test cases. 
The first test case, the IEEE 14-bus system, illustrates the convergence behavior of PALM. The second test case, the IEEE 118-bus system, demonstrates the scalability of the algorithm.}

\subsection{IEEE 14-bus Test Case}

 \begin{figure}
   \centering
   \includegraphics[width=0.48\textwidth]{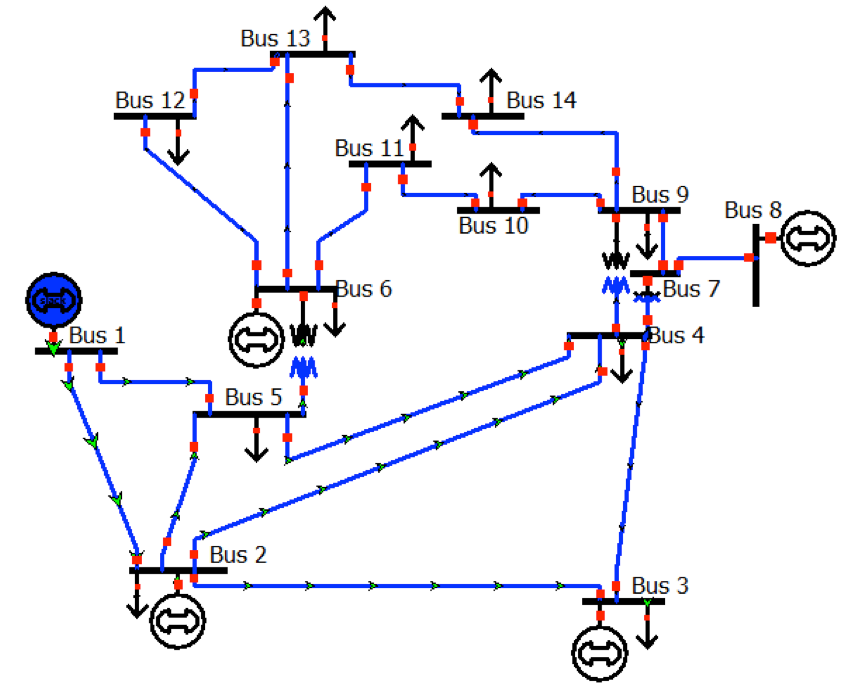}
   \caption{\tc{black}{Diagram of the IEEE 14-bus test case.}}
   \label{fig.IEEE14}
 \end{figure}

\begin{figure*}
  \centering
  \begin{tikzpicture}
    \begin{axis} [width=0.35\textwidth,
      xlabel = PALM iteration index $k$,
      mark repeat={50},
      title = Objective value \mbox{$H_{\rho}(\gamma,z,\theta)$}
      ]
      \addplot table[x=IterNo,y=ObjVal] {Bus14_k5.txt};
    \end{axis}
  \end{tikzpicture}
  \begin{tikzpicture}
    \begin{semilogyaxis} [width=0.35\textwidth,
      xlabel = PALM iteration index $k$,
      mark repeat={50},
      title = Dual residuals
      ]
      \addplot table[x=IterNo,y=theta_res] {Bus14_k5.txt};
      \addplot table[x=IterNo,y=z_res] {Bus14_k5.txt};
      \addplot table[x=IterNo,y=gam_res] {Bus14_k5.txt};
      \legend{$\|\theta^{k+1}-\theta^k\|$,$\|z^{k+1} - z^k\|$,$\|\gamma^{k+1}-\gamma^k\|$}
    \end{semilogyaxis}
  \end{tikzpicture}
  \begin{tikzpicture}
    \begin{semilogyaxis} [width=0.35\textwidth,
      xlabel = PALM iteration index $k$,
      mark repeat={50},
      title = Primal residual \mbox{$\|c(\gamma,z,\theta)\|$}
      ]
      \addplot table[x=IterNo,y=prim_res] {Bus14_k5.txt};
    \end{semilogyaxis}
  \end{tikzpicture}
  \caption{\tc{black}{Convergence results of PALM for the IEEE 14-bus test case: the objective value (left), the dual residuals (middle), and the primal residual (right). The markers show at every 50 iterates.}}
  \label{fig.bus14-k5}
\end{figure*}

 \begin{table}
   \centering
   \caption{\tc{black}{Load-shedding strategy for the IEEE 14-bus test case.}}
   \begin{tabular}{|c|c|c|c|}
     \hline
     $K$ & Load Shed  & Percentage & Lines Removed \\
     \hline
      1  &  80.2   MW   & 18.3\%         & 13  \\
      2  &  90.5  MW   & 20.7\%         & 3, 13  \\
      3  & 105.1  MW   & 24.0\%   & 3, 13, 15 \\
      4  & 188.2  MW   & 43.0\%   &      3, 11, 13, 15 \\
      5  & 285.5       MW   &      65.3\%   &      3, 11, 12, 13, 15\\
     \hline   
   \end{tabular}
   \label{tab.IEEE14}
 \end{table}

%
% \begin{table}
%   \centering
%   \caption{Load-shedding strategy for the IEEE 14-bus test case.}
%   \begin{tabular}{|c|c|c|c|}
%     \hline
%     $K$ & Load Shed  & Percentage & Lines Removed \\
%     \hline
%      1  &  47.3135            MW   & 10.81 \%         & 12  \\
%      2  &  51.3888           MW   & 11.75  \%         & 12, 2 \\
%      3  & 78.6742         MW   & 17.98 \%   & 12, 2, 9 \\
%      4  & 151.4783         MW   & 34.62 \%   & 12, 2, 9, 5 \\
%      5  & 221.3116       MW   & 50.59 \%   & 12, 2, 9, 5, 15\\
%     \hline   
%   \end{tabular}
%   \label{tab.IEEE14}
% \end{table}

%
% \begin{table}
%   \centering
%   \caption{The set of out-of-service lines and the bus types.}
%   \begin{tabular}{|c|c|c|c|c|}
%     \hline
%     Line & Bus & Type & Bus & Type   \\ 
%     \hline
%      12  & 6  & load &  12 & load   \\
%     2   & 1  & gen &  5 & load   \\
%     9   & 4 & load  &  9 & load  \\
%    5   & 2  & gen &  5 & load   \\
%    15   & 7  & load & 9 & load \\
%     \hline
%   \end{tabular}
%   \label{tab.cutsIEEE14}
% \end{table}

% \begin{figure}
%   \centering
%   \includegraphics[width=0.48\textwidth]{IEEE14_cut_nolines}
%   \caption{Diagram of the IEEE 14-bus test case with 5 out-of-service.}
%   \label{fig.IEEE14_cut}
% \end{figure}

Consider the IEEE 14-bus test case shown in Fig.~\ref{fig.IEEE14}. This small system has 5 generator buses, 9 load buses, and 20 transmission lines. We compute the generation profile, $P_g$, and the load profile, $P_d$, by solving the steady-state power flow equations via MATPOWER~\cite{zimburcartho11}. 

\tc{black}{We take out up to 5 lines to track the progress of the worst-case load shedding in this small network. As the out-of-service number of lines increases from $K=1$ to $K=5$, the amount of load shed increases from $18.3\%$ to $65.3\%$ of the total power load; see Table~\ref{tab.IEEE14}.} \tc{black}{It turns out that the set of lines to be taken out of service is a subset of the lines as $K$ increases. This implies the consistency in the set of critical transmission lines for load-shedding.  The out-of-service lines are highlighted in Fig.~\ref{fig.IEEE14}. It is worth mentioning that PALM is initialized with $(\gamma=\bfo,\theta={\bf 0},z={\bf 0})$ for all $K=1,\ldots,5$. In other words, the algorithm starts with full service lines and zero load shed.}

\tc{black}{Figure~\ref{fig.bus14-k5} shows the convergence results of PALM when $5$ lines are removed. The objective function decreases monotonically with the PALM iterations, thereby confirming the prediction in Proposition~\ref{pro.conv}. Furthermore, both the dual residuals and the primal residual decrease monotonically. The fastest convergence of PALM is in the first 200-300 iterations, in this case.} The convergence rate depends on the size of the problem and the choice of parameter $\rho$. While a bigger $\rho$ improves the primal convergence rate, it slows down the dual convergence rate. In practice, we find that $\rho \in [10^4,10^6]$ achieves a good balance between the primal and dual residuals.

\tc{black}{Since we relax the constraint $c(\gamma,z,\theta) = 0$ in~\eqref{eq.lsp}, we check the solution quality in satisfying the power flow equation. As shown in Fig.~\ref{fig.bus14-k5}, the primal residual $\|c(\gamma,z,\theta)\|$ is monotonically decreasing with PALM iterations; in particular, we have $\|c(\gamma,z,\theta)\| \leq 3.5 \times 10^{-3}$ after 1000 iterations. As discussed above, one can further reduce the primal residual by increasing the penalty parameter $\rho$.}

%Techniques that update $\rho$ based on primal and dual residuals can be found in~\cite{boyparchupeleck11}.

%The out-of-service lines are highlighted in Fig.~\ref{fig.IEEE14}. To gain some physical insight into the out-of-service lines, we consider the types of buses with which the lines connect. Table~\ref{tab.cutsIEEE14} shows a total of 5 lines and 8 buses, among which 6 buses are load buses and 2 buses are generator buses.

\subsection{IEEE 118-bus Test Case}

 \begin{figure}
   \centering
   \includegraphics[width=0.48\textwidth]{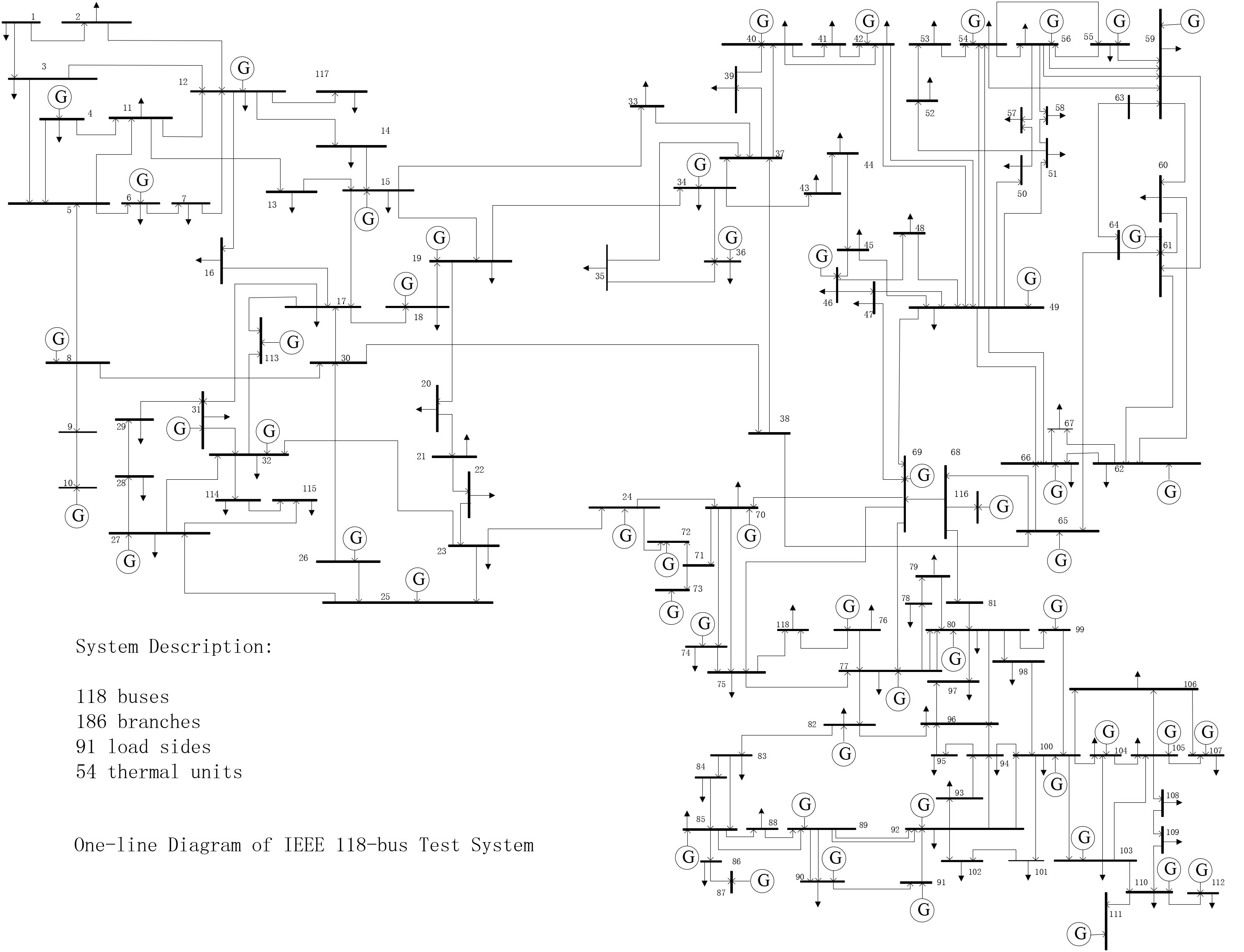}
   \caption{Diagram of the IEEE 118-bus test case.}
   \label{fig.IEEE118}
 \end{figure}

\begin{figure*}
  \centering
  \begin{tikzpicture}
    \begin{axis} [width=0.35\textwidth,
      xlabel = PALM iteration index $k$,
      mark repeat={100},
      title = Objective value \mbox{$H_{\rho}(\gamma,z,\theta)$}
      ]
      \addplot table[x=IterNo,y=ObjVal] {Bus118_k5.txt};
    \end{axis}
  \end{tikzpicture}
  \begin{tikzpicture}
    \begin{semilogyaxis} [width=0.35\textwidth,
      xlabel = PALM iteration index $k$,
      mark repeat={100},
      title = Dual residuals
      ]
      \addplot table[x=IterNo,y=theta_res] {Bus118_k5.txt};
      \addplot table[x=IterNo,y=z_res] {Bus118_k5.txt};
      \addplot table[x=IterNo,y=gam_res] {Bus118_k5.txt};
      \legend{$\|\theta^{k+1}-\theta^k\|$,$\|z^{k+1} - z^k\|$,$\|\gamma^{k+1}-\gamma^k\|$}
    \end{semilogyaxis}
  \end{tikzpicture}
  \begin{tikzpicture}
    \begin{axis} [width=0.35\textwidth,
      xlabel = PALM iteration index $k$,
      mark repeat={100},
      title = Primal residual \mbox{$\|c(\gamma,z,\theta)\|$}
      ]
      \addplot table[x=IterNo,y=prim_res] {Bus118_k5.txt};
    \end{axis}
  \end{tikzpicture}
  \caption{Convergence results of PALM for the IEEE 118-bus test case: the objective value (left), the dual residuals (middle), and the primal residual (right). The markers show at every 100 iterations.}
  \label{fig.bus118-k5}
\end{figure*}

We next consider the IEEE 118-bus test case as shown in Fig.~\ref{fig.IEEE118}. This large power system has 54 generator buses, 64 load buses, and 186 transmission lines. As in the IEEE-14 bus system, the generation profile, $P_g$, and load profile, $P_d$, are obtained by solving the steady-state power flow equations via MATPOWER~\cite{zimburcartho11}. 

While the 118-bus system is much larger than the 14-bus system, the convergence behavior of PALM is quite similar. The objective value, the dual residuals, and the primal residual all decrease monotonically, as shown in Fig.~\ref{fig.bus118-k5}. \tc{black}{After 2000 iterations, the primal residual is smaller than $1.3 \times 10^{-2}$ and the dual residual is smaller than $1.2\times 10^{-5}$. The solution quality is determined by the primal residual, which is $\| c(\gamma,z,\theta) \| \leq 1.3  \times 10^{-2}$ after 2000 PALM iterations. It is worth mentioning that the computational time is less than 10 minutes on a laptop with 8GB RAM running 2.4GHz CPU.}

\tc{black}{Table~\ref{tab.IEEE118} shows the worst-case load-shedding scenarios with removal up to $5$ transmission lines. As observed in 14-bus test case, the most critical lines to be taken out of service form a subset of lines as $K$ increases. For this large system, the load shed percentage is less than $10.2\%$ when $5$ lines are taken out. This is in contrast to the 14-bus system, in which the load shed percentage is more than $65\%$ when $K=5$.}

\tc{black}{To gain some insight into the out-of-service lines, we consider the types of buses with which the lines connect. As shown in Table~\ref{tab.cutsIEEE118}, all critical lines connect the same types of buses, that is, generator to generator and load to load buses. In particular, 4 out of the 5 critical lines connect generator  buses. This indicates the importance of lines between generator buses in the IEEE-118 system.}

%We next consider the phase angle across transmission lines. A larger phase angle implies a larger power transfer over a transmission line. Thus, it is a good indicator of overload in contingency analysis of power grids~\cite{donloplespinyanmez05}. Figure~\ref{fig.IEEE118_phaseangle} shows the phase angle difference $E^T \theta$ across all 186 lines when lines $\{43,   163,   173,   176,   177\}$ are taken out of services. The maximum angle difference is more than $5.47$ degrees. In particular, $85\%$ of the phase angles of are less than 0.5 degrees, indicating a well-balanced operating status.

 \begin{table}
   \centering
   \caption{\tc{black}{Load-shedding strategy for the IEEE 118-bus test case.}}
   \begin{tabular}{|c|c|c|c|}
     \hline
     $K$ & Load Shed  & Percentage & Lines Removed \\
     \hline
      1  &  136.6 MW   &    3.1\%            & 176  \\
      2  &  238.2 MW   &    5.4\%          &    173,   176   \\
      3  &  307.1 MW   &    7.0\%   &     173,   176,   177 \\
      4  &  321.1 MW   &    7.3\%   &     163,   173,   176,   177\\
      5  &  444.0 MW   &    10.1\%   &     43,   163,   173,   176,   177\\
     \hline   
   \end{tabular}
   \label{tab.IEEE118}
 \end{table}

 \begin{table}
   \centering
   \caption{\tc{black}{The set of out-of-service lines and the bus types for the IEEE 118-bus test case.}}
   \begin{tabular}{|c|c|c|c|c|}
     \hline
     Line & Bus & Type & Bus & Type   \\ 
     \hline
      43  & 27  & generator &  32 & generator   \\
     163   & 100  & generator &  103 & generator   \\
     173   & 108 & load  &  109 & load  \\
    176  & 110  & generator &  111 & generator   \\
    177   & 110  & generator & 112 & generator \\
     \hline
   \end{tabular}
   \label{tab.cutsIEEE118}
 \end{table}

%
% \begin{figure}
%   \centering
%   \includegraphics[width=0.48\textwidth]{IEEE118_k5_PhaseAngle.png}
%   \caption{The phase angle across transmission lines after $5$ lines,  $\{174, 119, 110, 135, 120\}$, are taken out of service for the 118-bus system.}
%   \label{fig.IEEE118_phaseangle}
% \end{figure}

\section{Conclusions}
\label{sec.conclusion}
We formulate the worst-case load-shedding problem in AC power networks. We show that this nonconvex control problem has a separable structure that can be exploited by PALM. The PALM algorithm decomposes load-shedding problem into a sequence of subproblems that are amenable to convex optimization or closed-form solutions. We prove convergence of PALM to a critical point by leveraging the KL theory. 

We believe that our proof techniques and the upper bounds on the Lipschitz constants can be instrumental in developing other decomposition algorithms in large-scale power networks. While our model focuses on active AC power flows, the dynamics for the reactive power flows can be captured by the same set of nonlinear equations. We anticipate that the developed approach can be applied to fully nonlinear models with both active and reactive power equations. 

\section*{Acknowledgments}
We thank the reviewers for their comments and suggestions that improve the paper. This material is based upon work supported by the U.S. Department of Energy, Office of Science, Office of Advanced Scientific Computing Research, Applied Mathematics program under contract number DE-AC02-06CH11357.

\appendix

\subsection{Proof of Lemma~\ref{lem.gamsol}}
\label{pro.gamsol}

We prove by contradiction. Let $\gamma$ satisfy $\bfo^T \gamma = m-K$ and $\gamma_i \in \{0,1\}$, but $\gamma$ is different from the projection in~\eqref{eq.gamsol}. In other words, there exists at least one element of $\gamma$, say, the $l$th element such that $\gamma_l = 1$ with the corresponding $u^k_l < [u^k]_K$, and at least one element, say, the $j$th element such that $\gamma_j = 0$ with the corresponding $u_j^k \geq [u^k]_K$. Consider
\[
\delta_{lj} = (\gamma_l - u^k_l)^2 + (\gamma_j - u^k_j)^2 = (1 - u^k_l)^2 + (u^k_j)^2.
\]
and the cost of the swapping the values of $\gamma_l$ and $\gamma_j$
\[
\delta_{jl} = (u^k_l)^2 + (1 - u^k_j)^2.
\]
Since $\delta_{lj} - \delta_{jl} = 2(u^k_j - u^k_l) > 0$, we conclude that the cost function decreases if we choose $(\gamma_l = 0, \gamma_j = 1)$ instead of $(\gamma_l = 1, \gamma_j = 0)$. In other words, we can reduce the cost by swapping the values of $\gamma_l = 1$ with respect to $u^k_l < [u^k]_K$ and $\gamma_j = 0$ with respect to $u^k_l \geq [u^k]_K$ until~\eqref{eq.gamsol} is satisfied for all elements of $\gamma$. This completes the proof.

%\subsection{Derivation of~\eqref{eq.gradHthe}}
\subsection{Proof of Lemma~\ref{lem.gradH}}
\label{sec.gradH}
The derivations of~\eqref{eq.gradHgam} and~\eqref{eq.gradHz} are straightforward, as they amount to taking the derivatives of quadratic functions, thus omitted. The derivation of~\eqref{eq.gradHthe} involves taking the first-order variation for sine and cosine functions. We begin by taking variation $\tilde{\theta}$ around $\theta$ 
\begin{align*}
  \sin(E^T(\theta+\tilde{\theta}) )
  & \,=\, \sin(E^T\theta) \circ \cos(E^T \tilde{\theta}) \\
  & \,+\, \cos(E^T\theta) \circ \sin(E^T \tilde{\theta}) 
\end{align*}
where $\circ$ is the Hadamard (elementwise) product. When $\tilde{\theta}$ is small, we have the first-order approximation
\begin{align*}
  \sin(E^T(\theta+\tilde{\theta}))  
  & \,\approx \,
  \sin(E^T\theta)  \,+\, \diag(\cos(E^T \theta)) E^T \tilde{\theta}.
\end{align*}
It follows that the first-order approximation of $H_\rho(\theta+\tilde{\theta})$ is given by
\begin{align*}
  H_\rho(\theta+\tilde{\theta}) 
  & \approx H_\rho(\theta) + \rho \, ( ED\Gamma \sin(E^T\theta) - (P+z))^T \\
  & \hspace{0.5in} \times ED\Gamma
  \, \diag(\cos(E^T\theta)) E^T \tilde{\theta}.
\end{align*}
Taking the transpose of the matrix multiplying $\tilde{\theta}$ yields
\[
  \nabla_\theta H_\rho(\theta)
= \rho E \diag( \cos(E^T \theta) ) [Q \sin(E^T \theta) - R].
\]
where $Q = \Gamma D E^T E D \Gamma$ and $R = \Gamma D E^T(P + z)$. 

\subsection{Lipschitz constant of $\nabla_\theta H_\rho$}
\label{sec.lip}

Recall that
\[
  \| \sin(\theta_1 - \theta_2) \| \, \leq \, \| \theta_1 - \theta_2 \|
\]
for all $\theta_1,\theta_2$. We have 
\begin{align}
\nonumber
& \| \sin(E^T \theta_1) - \sin(E^T \theta_2) \| \\
\nonumber
\,&=\,  
\| 2 \cos(E^T (\theta_1+\theta_2)/2) \circ \sin(E^T (\theta_1-\theta_2)/2)\| \\
\label{eq.sin}
\,&\leq \,  2\|\sin(E^T (\theta_1 - \theta_2)/2)\| \leq \|E\| \|\theta_1 - \theta_2\|.
\end{align}
The equality is the elementwise sum-to-product identity. The first inequality follows from the fact that all cosine functions are upper bounded by 1. 
Similar calculation yields 
\begin{equation}
\label{eq.cos}
\| \cos(E^T \theta_1) - \cos(E^T \theta_2) \|  \leq \|E\| \|\theta_1 - \theta_2\|.
\end{equation}
Let $f(\theta) = \sin(E^T \theta)$ and $g(\theta) = \diag(\cos(E^T \theta)) Q$. By adding and subtracting the same term yields
\[
\begin{array}{l}
g(\theta_1) f(\theta_1) - 
  g(\theta_2) f(\theta_2)\\
=
g(\theta_1) (f(\theta_1) -  f(\theta_2)) +
(g(\theta_1) -  g(\theta_2)) f(\theta_2).
\end{array}
\]
We calculate 
\begin{align*}
  &
  \| g(\theta_1) f(\theta_1) -   g(\theta_2) f(\theta_2) \| \\
  & \leq \, \| \diag(\cos(E^T \theta_1)) Q (\sin(E^T \theta_1) - \sin(E^T \theta_2))\|  \\
  & \;+ \| \diag(\cos(E^T \theta_1) - \cos(E^T \theta_2)) Q \sin(E^T \theta_2) \| \\
  & \leq 2 \|Q\| \|E\| \| \theta_1 - \theta_2 \|
\end{align*}
where we have used~\eqref{eq.sin} and~\eqref{eq.cos}. It follows that the Lipschitz constant for $\nabla_\theta H_\rho$ is given by
\begin{align*}
  \| \nabla_\theta H_\rho(\theta_1) - \nabla_\theta H_\rho(\theta_2) \| 
\leq L_3 (\gamma^{k+1},z^{k+1}) \|\theta_1 - \theta_2\|,
\end{align*}
where 
\[
    L_3(\gamma^{k+1},z^{k+1}) =  \rho \|E\|^2 ( 2 \|Q^{k+1}\| + \|R^{k+1}\|)
\]
and 
\[
\begin{array}{l}
  Q^{k+1} \,=\, \Gamma^{k+1} D E^T E D \Gamma^{k+1},
  \\
  R^{k+1} \,=\, \Gamma^{k+1} DE^T (P+z^{k+1}).
\end{array}
\]

\bibliographystyle{IEEEtran}
\bibliography{../bibs/loadshed}
%%%%%%%%%%%%%%%%%%%%%%%%%%%%%%%%%%%%%%%%%%%%%%%%%%%%%%%%%%%%%%%%%%%%%%%%

\end{document}